\documentclass[12pt]{amsart}
\usepackage[style=numeric,natbib=true]{biblatex}
\addbibresource{pols.bib}
\usepackage[utf8]{inputenc}
\usepackage[T1]{fontenc}
\usepackage{hyperref}
\usepackage{amsmath}
\usepackage{amsfonts}
\usepackage{amssymb}
\usepackage{amsthm}
\usepackage{mathtools}
\usepackage{latexsym}
\usepackage{euscript}
\usepackage{xspace}
\usepackage{ifthen}
\usepackage{xstring}
\usepackage{txfonts}
\usepackage{calrsfs}
\usepackage{cleveref}
\usepackage{cancel}

\newcommand\ORCiD[1]{\href{https://orcid.org/#1}{\mbox{\scalerel*{
\begin{tikzpicture}[yscale=-1,transform shape]
\pic{orcidlogo};
\end{tikzpicture}
}{|}}}}
\renewcommand\ORCiD[1]{}
\newcommand{\amorcid}{0000-0003-0661-4495}
{\vspace{1em}
\begin{minipage}{12cm}
\rule{12cm}{1pt}\sffamily\noindent}%
{\newline\rule{12cm}{1pt}
\end{minipage}
 \vspace{1em}}

\newtheorem{teo}{Theorem}
\newtheorem{lem}[teo]{Lemma}
\newtheorem{pro}[teo]{Proposition}

{\begin{enumerate}}%
{\end{enumerate}}

\newcommand{\N}{\ensuremath{\mathbb{N}}\xspace}


\DeclarePairedDelimiter\floor{\lfloor}{\rfloor}
\newcommand{\ccomma}{\mathbin{\raisebox{0.5ex}{,}}}
\crefname{cor}{Corollary}{Corollaries}

\newcounter{numex}\setcounter{numex}{1}


\newcommand{\ub}{\ensuremath{\varepsilon}\xspace}

\usepackage{graphicx}
\hypersetup{colorlinks,%
citecolor=cyan,%
linkcolor=red,%
pdftex}
\usepackage{color}

\newcommand{\palsym}{\ensuremath{\sigma\!}}
\newcommand{\oldpal}[1]{
  \StrCut{#1}{;}\palARG\palX
  \IfStrEq{\palX}{\empty}{\def\palX{x}}{}
  \ensuremath{
    \IfSubStr{\palARG}{,}{
        \palsym^{\left(\StrBefore{\palARG}{,}\right)}_{\StrBehind{\expandafter\palARG}{,}}\left(\palX\right)
      }
      {\palsym_{\palARG}\left(\palX\right)}
    }
  }

\newcommand{\eipi}[1]{\ensuremath{
    \IfSubStr{#1}{;}{
      e^\frac{2i\pi \StrBefore{#1}{;}}{\StrBehind{#1}{;}}}
    {e^{2i\pi#1}}
}}

\title[Large finite products]{Large finite products of small fractions}

\author[A.~Mandel]{Arnaldo Mandel} 
\thanks{This work was partially supported by Conselho Nacional de
Desenvolvimento Científico e Tecnológico – CNPq (Proc.
423833/2018-9).} 
  \address{Computer Science Department, Instituto de Matem\'atica e Estat\'\i stica, Universidade de
  S\~ao Paulo\\
  \emph{\small S\~ao Paulo, SP, Brazil 05508-970}\\
\textsc{Orcid}: {\ORCiD{\amorcid}}\amorcid}
\email{ am@ime.usp.br}

\begin{document}

\begin{abstract}
  Fix positive reals \(a,b,c,d\), and let \(h(x)\) be a real
  function behaving sort of like \(\sin x\) near 0.  Then,
  provided \(m\) grows linearly with \(n\). there exists a
  positive constant \(C\) such that\[
    \prod_{j=0}^m\frac{h\left((cj+a)\frac{d}{n}\right)}{h\left((cj+b)\frac{d}{n}\right)}\sim C n^{\frac{a-b}c}.
  \]  
\end{abstract}
\subjclass[2010]{Primary: 41A60; Secondary: 33B15}
\maketitle


Let \(n\geq 4\) be an integer, denote \(\theta=\frac\pi{2n}\),
and consider the finite product
\[
  D_n=\frac{\sin 5\theta}{\sin 3\theta}\cdot\frac{\sin 9\theta}{\sin 7\theta}\cdot\frac{\sin 13\theta}{\sin 11\theta}\cdots
\]  
where the terms go on while the arguments of \(\sin\) stay below
\(\frac\pi2\).  The proof of an important result in \citet{MR}
hinges on showing that \(D_n\) grows unboundedly with \(n\).  It
is shown there that \(D_n=\Omega(n^{\frac12-\varepsilon})\),
which is enough; here we remove the annoying \(-\varepsilon\)
from the exponent and determine the precise order of growth.

Here is an outrageous idea,  just do some obvious cancellations:\\
\(\prod_j\frac{\sin(4j+5)\theta}{\sin(4j+3)\theta}=\
\prod_j\frac{\cancel\sin(4j+5)\theta}{\cancel\sin(4j+3)\theta}=
\prod_j\frac{(4j+5)\cancel\theta}{(4j+3)\cancel\theta}=
\prod_j\frac{4j+5}{4j+3}\),
and proceed from there.

Some people will have an issue with that, of course; however,
this dumb idea turns out to be useful!  Indeed, we will tackle a
strong generalization of the product above, presenting good
asymptotics, and the product obtained by illegal cancellation
will be a major tool.

The \(\sin\) function is not very special in this context.  The following encapsulates
what about it is relevant here. We use primes to denote derivatives.
\begin{pro}\label{hcond}
  For a real function \(h\), analytic around \(0\), the following are equivalent:

  \begin{enumerate}
      \item \(h(0)=h''(0)=0\), \(h'(0)>0\), and \(h''(x)\leq 0\)
    for positive \(x\) close to \(0\).
      \item \(h(x)\) is the identity function or there exist
    reals \(\alpha,\lambda>0\) and integer \(k\geq 3\) such that
    \(h(x)=\alpha(x-\lambda x^k)+O(x^{k+1})\).
  \end{enumerate}
\end{pro}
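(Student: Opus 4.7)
The plan is to expand $h$ in a Taylor series about the origin, which is legitimate by analyticity, and to read both directions of the equivalence off the first few coefficients. Writing $h(x) = \sum_{n \ge 0} a_n x^n$, the whole proposition reduces to a short inspection of $a_0, a_1, a_2, \ldots$.

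For (ii) $\Rightarrow$ (i), the identity case is immediate. Otherwise, from $h(x) = \alpha(x - \lambda x^k) + O(x^{k+1})$ with $\alpha, \lambda > 0$ and $k \ge 3$, termwise differentiation gives $h(0) = 0$, $h'(0) = \alpha > 0$, and $h''(0) = 0$ because $k - 2 \ge 1$. Moreover $h''(x) = -\alpha \lambda k(k-1) x^{k-2} + O(x^{k-1})$, whose leading coefficient is strictly negative since $\alpha, \lambda > 0$, so $h''(x) \le 0$ for small positive $x$.

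For (i) $\Rightarrow$ (ii), the conditions $h(0) = h''(0) = 0$ and $h'(0) > 0$ force $a_0 = a_2 = 0$ and $a_1 > 0$; set $\alpha := a_1$. If all $a_n$ with $n \ge 3$ vanish, then $h(x) = \alpha x$, a positive scalar multiple of the identity, matching the first alternative of (ii). Otherwise let $k \ge 3$ be the smallest index with $a_k \ne 0$, so that near $0$ one has $h''(x) = k(k-1) a_k x^{k-2} + O(x^{k-1})$. The hypothesis $h''(x) \le 0$ for small positive $x$ then forces $a_k \le 0$, and since $a_k \ne 0$ we obtain $a_k < 0$; taking $\lambda := -a_k/\alpha > 0$ gives the claimed form. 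The sole subtle point is this last sign deduction, which rests on the elementary principle that the first non-zero term of a convergent power series controls its sign in a one-sided neighbourhood of $0$; everything else is bookkeeping on Taylor coefficients.
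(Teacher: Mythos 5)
Your argument is correct, and since the paper dispatches this proposition with the single word ``Exercise,'' there is no in-paper proof to compare against; the Taylor-coefficient inspection you give (read off \(a_0,a_1,a_2\), locate the first nonvanishing \(a_k\) with \(k\ge 3\), and use the fact that the lowest-order term controls the sign of \(h''\) in a one-sided neighbourhood of \(0\)) is surely the intended one. The only quibble is in the degenerate case: if \(a_n=0\) for all \(n\ge 3\) you obtain \(h(x)=\alpha x\), which for \(\alpha\neq 1\) is not literally ``the identity function'' as alternative (ii) demands, and cannot be written as \(\alpha(x-\lambda x^k)+O(x^{k+1})\) with \(\lambda>0\) either; this is a minor imprecision in the statement of the proposition (which evidently intends ``a positive multiple of the identity'') rather than a flaw in your reasoning, but you should flag it instead of asserting that \(\alpha x\) matches (ii) as written.
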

\begin{proof}
  Exercise.
\end{proof}

For convenience, call a function as above an \emph{S-function} (a
generalized sine, so to speak).

Let \(a,b,c,d\) be positive reals; they are supposed to be
constant, throughout. We consider an additional positive real
parameter \ub, subject to the following
\emph{compatibility condition}: \(\ub\leq cd\), and, if
\(\ub=cd\), then \(cd>1\); this weird condition will only surface
in \cref{decrease}. Finally, given \(n\in\N\), let
\(m=m(n)\in\N\) be maximum such that both
\((cm+a)\frac{d}{n}\ccomma(cm+b)\frac{d}{n}\leq\ub\), that is
\(m=\floor{\frac{n\ub}{cd}-\frac{\max(a,b)}c}\).  Given a real
function \(h\), we define two products:
\begin{align*}
  D_n(a,b,c,d,\ub;h) &= \prod_{j=0}^m\frac{h\left((cj+a)\frac{d}{n}\right)}{h\left((cj+b)\frac{d}{n}\right)}\ccomma\\
  K_n(a,b,c,\ub)   &= \prod_{j=0}^m\frac{cj+a}{cj+b}\cdot
\end{align*}

Note that if one chooses \(h(x)=x\), then
\(K_n(a,b,c,\ub)=D_n(a,b,c,d,\ub;h)\), just a special case -- or,
as mentioned before, by silly cancellation of \(h\)'s on the
expression of \(D_n\).  Note that there is no loss of generality
in taking \(d=1\), but we keep the extra parameter to cater for
the looks of the motivating example.

\begin{teo}\label{asymp}
  Suppose that \(h\) is an S-function, and
  let \(H(x)=\frac{h(x)}x\). Assume that a compatible \ub is such
  that in \([0,\ub]\) we have that \(H(x)>0\) and
  \(H''(x)\leq 0\). Then, there exists a constant \(C=C(a,b,c,\ub;h)\)
  such that
  \[
    D_n(a,b,c,\ub;h) \sim Cn^{\frac{a-b}c}.
  \]  
\end{teo}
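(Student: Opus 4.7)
The plan is to follow the ``illegal cancellation'' hint from the introduction: split $D_n$ into the Gamma-like product $K_n$ times a correction that should converge to a finite constant. Writing $h(x) = x\, H(x)$ factor by factor gives
\[
  D_n(a,b,c,d,\ub;h) = K_n(a,b,c,\ub)\cdot R_n, \qquad R_n := \prod_{j=0}^m \frac{H\!\left((cj+a)d/n\right)}{H\!\left((cj+b)d/n\right)}.
\]
I would prove the theorem by establishing $K_n \sim C_K\, n^{(a-b)/c}$ and $R_n \to C_R > 0$ separately, and then setting $C = C_K C_R$.

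The first piece, $K_n$, is essentially a ratio of Gamma values: after pulling out a common factor of $c$ from numerator and denominator, one sees
\[
  K_n = \frac{\Gamma(m+1+a/c)\,\Gamma(b/c)}{\Gamma(m+1+b/c)\,\Gamma(a/c)}.
\]
Since $m = n\ub/(cd)+O(1)$, the standard asymptotic $\Gamma(m+1+a/c)/\Gamma(m+1+b/c) \sim m^{(a-b)/c}$ (Stirling) yields
\[
  K_n \sim \frac{\Gamma(b/c)}{\Gamma(a/c)}\left(\frac{\ub}{cd}\right)^{(a-b)/c} n^{(a-b)/c}.
\]

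For the second piece, let $g := \log H$; this is well-defined and smooth on $[0,\ub]$ because $H(0) = h'(0) > 0$ for an S-function. The mean value theorem gives
\[
  \log R_n = \sum_{j=0}^m \bigl[g((cj+a)d/n) - g((cj+b)d/n)\bigr] = (a-b)\frac{d}{n}\sum_{j=0}^m g'(\xi_j),
\]
with each $\xi_j$ caught between the two arguments. The right-hand side is a Riemann-style sum of mesh $cd/n$ sweeping $[0,\ub]$, and I expect it to tend to $\frac{a-b}{c}\int_0^\ub g'(x)\,dx = \frac{a-b}{c}\log(H(\ub)/H(0))$, which would force $R_n \to (H(\ub)/H(0))^{(a-b)/c}$.

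The main technical difficulty is turning this Riemann-sum heuristic into genuine convergence with an error that is truly $o(1)$, not merely $O(1)$. This is where the concavity hypothesis $H'' \leq 0$ does real work: together with $H > 0$ it gives $g'' = (H''H - (H')^2)/H^2 \leq 0$, so $g'$ is monotonically decreasing on $[0,\ub]$. Monotonicity lets me sandwich $\sum g'(\xi_j)$ between two uniform-grid sums, each a monotone Riemann sum for $\frac{n}{cd}\int_0^\ub g'(x)\,dx$ with an $O(1)$ discrepancy. Multiplying by $d/n$ then yields $\log R_n = \frac{a-b}{c}\log(H(\ub)/H(0)) + O(1/n)$. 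Combining the two factors, the theorem follows with
\[
  C = \frac{\Gamma(b/c)}{\Gamma(a/c)}\left(\frac{h(\ub)}{cd\cdot h'(0)}\right)^{(a-b)/c}.
\]
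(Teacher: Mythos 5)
Your decomposition $D_n = K_n \cdot R_n$ and your treatment of the Gamma-type factor $K_n$ coincide with the paper's (\cref{kprod}), but your handling of the correction factor $R_n$ takes a genuinely different --- and stronger --- route. The paper never identifies the limit of $R_n$: it proves existence by showing the sequence is eventually term-by-term decreasing (\cref{decrease}, which is where the compatibility condition and the concavity of $H$ are spent) and bounded away from zero via a compactness argument on an auxiliary two-variable function (\cref{omega}). You instead take logarithms, apply the mean value theorem factor by factor, and recognize a Riemann sum for $\frac{a-b}{c}\int_0^{\ub}(\log H)'$; since $g'=H'/H$ is continuous, hence bounded and uniformly continuous, on the compact interval $[0,\ub]$ where $H>0$, and since monotonicity of $g'$ (from $H''\le 0$) controls the fact that the tags $\xi_j$ may fall slightly outside their grid cells when $\max(a,b)>c$, this step is sound. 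What it buys is considerable: an explicit limit $R_n\to\left(H(\ub)/H(0)\right)^{(a-b)/c}$ and hence a closed form for $C$, which the paper only bounds from above and explicitly poses as open (Problem~1). Your formula is consistent with that upper bound (since $H(\ub)\le H(0)$ for decreasing $H$), reproduces the paper's closing Exercise for $H=e^{-x^k}$, and checks out against an exact Gamma-function computation for the rational case $H(x)=1-x^2$; it also dispenses with the compatibility condition entirely. Two points to tighten before claiming all this: write out the Riemann-sum error estimate honestly (monotone integrand, uniform grid, $O(1)$ un-normalized discrepancy, plus the $O(1/n)$ edge effect from the covered interval being $[bd/n,(cm+a)d/n]$ rather than $[0,\ub]$), and note that your resulting $O(1/n)$ convergence rate is in tension with the paper's experimentally suggested $O(1/\log n)$ in Problem~2 --- so a numerical check of the motivating example $D_n(5,3,4,\pi/2,\pi/2;\sin)$ against your predicted $C=\frac{\Gamma(3/4)}{\Gamma(5/4)}(2\pi)^{-1/2}$ is warranted before asserting that you have settled both open problems.
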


Notice that, given \(h\), one can always choose \ub as required,
since \(H(x)=1-\lambda x^{k-1}+O(x^k)\).

The motivating example in \cite{MR} is
\(D_n(5,3,4,\pi/2,\pi/2;\sin)\); some routine algebraic
manipulation show that \(\ub=\frac\pi2\) conforms to the
requirements of the Theorem, and we obtain \(D_n \sim C\sqrt{n}\)
for some constant \(C\).

The result will be obtained by comparing \(D_n\) and \(K_n\).  The
asymptotics for \(K_n\) is well known (it essentially appears in
\cite[\(11^{\text{th}}\) formula line]{diek}).

\begin{pro}\label{kprod}
  \(K_n(a,b,c,\ub)\sim
\frac{\Gamma(b/c)}{\Gamma(a/c)}\left(\frac{\ub}c\right)^{\frac{a-b}c}n^{\frac{a-b}c}\).
\end{pro}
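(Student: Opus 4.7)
The plan is to convert $K_n$ to a ratio of Gamma values and then apply the standard Gamma-ratio asymptotic. Writing each factor as $\frac{cj+a}{cj+b} = \frac{j+a/c}{j+b/c}$ and using the identity $\prod_{j=0}^m(j+\alpha) = \Gamma(m+1+\alpha)/\Gamma(\alpha)$ on numerator and denominator separately, I would obtain
\[
K_n(a,b,c,\ub) \;=\; \frac{\Gamma(b/c)}{\Gamma(a/c)} \cdot \frac{\Gamma(m+1+a/c)}{\Gamma(m+1+b/c)}.
\]

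Next I would invoke the classical one-line consequence of Stirling,
\[
\frac{\Gamma(x+\alpha)}{\Gamma(x+\beta)} \sim x^{\alpha-\beta} \qquad (x\to\infty),
\]
with $x=m+1$, $\alpha=a/c$, $\beta=b/c$, giving $\Gamma(m+1+a/c)/\Gamma(m+1+b/c) \sim (m+1)^{(a-b)/c}$. From the closed form $m = \floor{n\ub/c - \max(a,b)/c}$ (taking $d=1$, which is WLOG as already remarked) we have $m+1 = n\ub/c + O(1)$, so $(m+1)^{(a-b)/c} \sim (\ub/c)^{(a-b)/c}\,n^{(a-b)/c}$. Combining with the Gamma prefactor yields the stated asymptotic.

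There is no serious obstacle here: the argument is pure Stirling, and passing from $m+1$ to $n\ub/c$ is immediate since $(x+O(1))^\gamma = x^\gamma(1+o(1))$ for any real $\gamma$. An alternative would be to cite Diekema directly, as the author already indicates; the derivation above makes explicit why that formula applies in our notation.
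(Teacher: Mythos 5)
Your argument is correct and is essentially identical to the paper's own proof: the same rewriting of $K_n$ as $\frac{\Gamma(b/c)}{\Gamma(a/c)}\frac{\Gamma(m+1+a/c)}{\Gamma(m+1+b/c)}$, the same Wendel/Stirling asymptotic for the Gamma ratio, and the same final step $m\sim \ub n/c$. Nothing to add.
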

\begin{proof}
  We can rewrite
  \[
    K_n(a,b,c,\ub)=\prod_{j=0}^m\frac{j+a/c}{j+b/c}=
    \frac{\Gamma(b/c)}{\Gamma(a/c)}\frac{\Gamma(m+1+a/c)}{\Gamma(m+1+b/c)}\cdot
  \]
  The last quotient is asymptotic to \(m^{\frac{a-b}c}\) (this
  follows easily from Stirling's formula; it falls into ``well
  known'', see \cite{wendel}, \cite[eq. 5.11.12]{nist}). The result follows by noticing that \(m\sim \frac{\ub n}c\).
\end{proof}

Without loss of generality, we will assume \(d=1\) from now on, and remove it
altogether from the notation.  Define
\begin{equation}
  \label{eq:En}
  E_n(a,b,c,\ub;h)=\frac{D_n(a,b,c,\ub;h)}{K_n(a,b,c,\ub)}=
  \prod_{j=0}^m\frac{H\left(\frac{cj+a}{n}\right)}{H\left(\frac{cj+b}{n}\right)}\cdot  
\end{equation}

Our goal will be met by showing that
\(\lim_{n\rightarrow\infty}E_n\) exists and is positive.  This
result is as interesting as \cref{asymp} itself, so we state it
in full, granting  \(H\) first class status, in parallel with \cref{hcond}.

\begin{pro}\label{Hcond}
  For a nonconstant real function \(H\), analytic around \(0\), the following are equivalent:
  \begin{enumerate}
      \item \(H(0)>0\),\(H'(0)=0\), and \(H''(x)\leq 0\) for
    positive \(x\) close to \(0\). 
      \item There exist
    reals \(\alpha, \lambda>0\) and integer \(k\geq 2\) such that
    \(H(x)=\alpha(1-\lambda x^k)+O(x^{k+1})\).
  \end{enumerate}
\end{pro}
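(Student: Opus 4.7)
The plan is to verify both implications by direct manipulation of the Taylor series of $H$ at $0$; this is essentially the analogue, one derivative up, of \cref{hcond}, and is of the same routine flavor.

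For (ii) $\Rightarrow$ (i), assume $H(x)=\alpha(1-\lambda x^k)+O(x^{k+1})$ with $\alpha,\lambda>0$ and integer $k\geq 2$. One reads off $H(0)=\alpha>0$ immediately, and $H'(x)=-\alpha\lambda k x^{k-1}+O(x^{k})$ vanishes at $0$ since $k-1\geq 1$. Differentiating once more gives $H''(x)=-\alpha\lambda k(k-1)x^{k-2}+O(x^{k-1})$; the leading term is either the strictly negative constant $-2\alpha\lambda$ (when $k=2$) or a non-positive monomial in $x$ that dominates the remainder (when $k\geq 3$), so $H''(x)\leq 0$ for positive $x$ sufficiently close to $0$.

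For the converse (i) $\Rightarrow$ (ii), write $H(x)=\sum_{j\geq 0}a_j x^j$; the hypotheses $H(0)>0$ and $H'(0)=0$ translate into $a_0>0$ and $a_1=0$. Since $H$ is nonconstant, let $k\geq 2$ be the smallest index with $a_k\neq 0$, so that $H(x)=a_0+a_k x^k+O(x^{k+1})$ and hence $H''(x)=k(k-1)a_k x^{k-2}+O(x^{k-1})$. Requiring $H''(x)\leq 0$ for small positive $x$ forces the sign of the leading term to be non-positive, which combined with $a_k\neq 0$ yields $a_k<0$. Setting $\alpha:=a_0$ and $\lambda:=-a_k/a_0$ produces two positive reals and rewrites $H(x)=\alpha(1-\lambda x^k)+O(x^{k+1})$, as required.

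There is really no obstacle beyond bookkeeping of Taylor coefficients; the one point worth flagging is that the nonconstancy hypothesis is precisely what guarantees the existence of the exponent $k$ and therefore the positivity of $\lambda$. Without it, a positive constant function would satisfy (i) vacuously but could not be cast in the form demanded by (ii).
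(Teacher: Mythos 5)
Your proof is correct; the paper offers no argument for this proposition at all (its sibling, the S-function characterization, is explicitly left as an ``Exercise''), and your Taylor-coefficient bookkeeping is exactly the intended routine verification, including the key observation that nonconstancy is what supplies the exponent \(k\) and hence \(\lambda>0\). The only nitpick is phrasing: since \(a_0\neq 0\), you should say ``the smallest index \(k\geq 1\) with \(a_k\neq 0\)'' (which is automatically \(\geq 2\) because \(a_1=0\)), rather than ``the smallest index with \(a_k\neq 0\).''
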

Motivated as before, we call such a function a \emph{C-function}.
Clearly, \(h(x)\) is an S-function if and only if \(h(x)/x\)
is a C-function.  Therefore,
\(E_n(a,b,c,\ub;h)=D_n(a,b,c,\ub;H)\) if \(H(x)=h(x)/x\).


\begin{teo}\label{limexists}
  Suppose that \(H\) is a C-function.
  Assume that a compatible \ub is such
  that in \([0,\ub]\) we have that \(H(x)>0\) and
  \(H''(x)\leq 0\). Then \(\lim_{n\rightarrow\infty}D_n(a,b,c,\ub;H)\) exists and is positive.
\end{teo}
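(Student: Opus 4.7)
The plan is to pass to logarithms and reduce the problem to the convergence of a Riemann sum. First I would set $f(x) := \log H(x)$; since $H$ is positive and continuously differentiable on $[0,\ub]$, $f$ is well defined and $C^1$ there, with $f'(x)=H'(x)/H(x)$ continuous on the compact interval $[0,\ub]$. Then
\[
\log D_n(a,b,c,\ub;H) \;=\; \sum_{j=0}^{m}\left[f\!\left(\tfrac{cj+a}{n}\right)-f\!\left(\tfrac{cj+b}{n}\right)\right].
\]
Applying the mean value theorem to each summand, each equals $\tfrac{a-b}{n}\,f'(\xi_j^{(n)})$ for some $\xi_j^{(n)}$ lying between $(cj+a)/n$ and $(cj+b)/n$, and pulling out a factor of $c/n$ yields
\[
\log D_n \;=\; \frac{a-b}{c}\cdot\frac{c}{n}\sum_{j=0}^{m} f'\!\left(\xi_j^{(n)}\right).
\]

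Next I would identify the sum on the right as (up to a vanishing boundary correction) a Riemann sum for $f'$ on $[0,\ub]$. The sample points $\xi_j^{(n)}$ lie within $\max(a,b)/n$ of the uniform grid $\{cj/n\}_{j=0}^{m}$, the mesh $c/n$ tends to $0$, and $cm/n\to\ub$ by the definition of $m$. Since $f'$ is continuous, hence Riemann-integrable, on $[0,\ub]$, the sum will converge to $\int_0^{\ub} f'(x)\,dx = f(\ub)-f(0) = \log(H(\ub)/H(0))$. Combining the pieces, $\log D_n \to \tfrac{a-b}{c}\log(H(\ub)/H(0))$, so
\[
D_n(a,b,c,\ub;H) \;\longrightarrow\; \left(\frac{H(\ub)}{H(0)}\right)^{\!(a-b)/c},
\]
a positive real number because both $H(0)$ and $H(\ub)$ are strictly positive.

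The only step requiring care is the Riemann-sum approximation: the sum has $m+1\sim n\ub/c$ terms, each of size $O(1/n)$, so I must verify that the error per term is $o(1/n)$ in aggregate. This will follow from uniform continuity of $f'$ on $[0,\ub]$ (controlling the displacement between the sample points $\xi_j^{(n)}$ and an exact uniform partition) together with boundedness of $f'$ (absorbing the $O(1)$ boundary terms at the ends of the interval). I do not anticipate genuine difficulty here. Notice that the concavity hypothesis $H''\leq 0$ is not invoked by this argument; presumably it plays its role in the finer comparisons alluded to later in the paper.
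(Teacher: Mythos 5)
Your proof is correct, and it takes a genuinely different route from the paper's. The paper obtains convergence of \(D_n(a,b,c,\ub;H)\) (equivalently, of \(E_n\)) by monotone boundedness: \cref{omega} bounds the products away from \(0\) via a compactness argument, and \cref{decrease} shows that for large \(n\) each factor of \(D_{n+1}\) is dominated by a corresponding factor of \(D_n\) --- and it is only in \cref{decrease} that the concavity hypothesis \(H''\leq 0\) and the compatibility condition on \ub are actually spent; the limit itself is never identified (hence Problem~1). Your route --- logarithms, the mean value theorem factor by factor, and a perturbed Riemann sum for \((\log H)'\) --- uses only that \(H\) is positive and \(C^1\) on \([0,\ub]\) (which the hypotheses do supply, since \(H''\) is assumed to exist there, so \(H'\) is continuous), so it dispenses with both the concavity and the compatibility assumptions, and it yields strictly more, namely the closed form
\[
\lim_{n\rightarrow\infty}D_n(a,b,c,\ub;H)=\left(\frac{H(\ub)}{H(0)}\right)^{\frac{a-b}{c}}.
\]
This value is corroborated by the paper's own exercise: for \(H(x)=e^{-x^k}\) and \(\ub=\left(\frac{k-1}{k}\right)^{1/k}\) it gives exactly \(e^{-\frac{k-1}{k}\frac{a-b}{c}}\), as claimed there. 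The error analysis you defer is indeed routine: the sample points are displaced from the uniform grid by at most \(\max(a,b)/n\), there are \(m+1=O(n)\) terms each carrying weight \(c/n\), and \(c(m+1)/n-\ub=O(1/n)\), so uniform continuity and boundedness of \((\log H)'\) on the compact interval \([0,\ub]\) control everything. What you lose relative to the paper is only the monotonicity information about the sequence; what you gain is the limit itself, which in effect answers Problem~1 (and, since the aggregate Riemann-sum error is \(O(1/n)\) once \((\log H)'\) is Lipschitz, is also relevant to Problem~2).
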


Noting that \(D_n(a,b,c,d,\ub;h)=D_n(b,a,c,d,\ub;h)^{-1}\),
we will assume, in what follows, that \(b<a\), as this will
entail both theorems in full.  The hypotheses of either Theorem
are assumed in the following lemmas, and \(h(x)=xH(x)\).

\begin{lem}\label{omega}
  For all sufficiently large \(n\), \(D_n(a,b,c,\ub;H)\) is
  bounded away from \(0\) -- that is,
  \(D_n(a,b,c,\ub;h)=\Omega(n^{\frac{a-b}c})\).
\end{lem}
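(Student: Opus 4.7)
The plan is to take logarithms and bound $-\log D_n(a,b,c,\ub;H)$ from above by $O(1)$. Since I am assuming $b<a$, and since $H$ is a C-function with $H(0)>0$, $H'(0)=0$, and $H''\le 0$ on $[0,\ub]$, integration of $H''$ from $0$ gives $H'(x)\le 0$ throughout the interval and $|H'|$ non-decreasing there; in particular $H$ is non-increasing on $[0,\ub]$, so each factor $H((cj+a)/n)/H((cj+b)/n)$ lies in $(0,1]$ and the product itself is at most $1$. The content of the lemma is a positive lower bound.

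Writing the $j$th factor as $1-t_j$, I have
\[
  t_j \;=\; \frac{H\!\left(\tfrac{cj+b}{n}\right) - H\!\left(\tfrac{cj+a}{n}\right)}{H\!\left(\tfrac{cj+b}{n}\right)} \;\ge\; 0.
\]
The Mean Value Theorem makes the numerator equal to $-H'(\eta_j)(a-b)/n$ for some $\eta_j\in[0,\ub]$, and $|H'(\eta_j)|\le |H'(\ub)|$ because $|H'|$ is maximized at the right endpoint of $[0,\ub]$. The denominator is at least $H(\ub)>0$ by monotonicity and strict positivity of $H$ on that compact interval. Hence
\[
  0\;\le\; t_j \;\le\; \frac{(a-b)\,|H'(\ub)|}{n\,H(\ub)} \;=\; O(1/n).
\]

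Since $m\le \ub n/c$, this gives $\sum_{j=0}^{m} t_j = O(1)$, uniformly in $n$. Choosing $n$ large enough that every $t_j\le 1/2$ (which is possible since they are uniformly $O(1/n)$), the inequality $\log(1-t)\ge -2t$ valid on $[0,1/2]$ yields
\[
  \log D_n(a,b,c,\ub;H) \;=\; \sum_{j=0}^{m}\log(1-t_j) \;\ge\; -2\sum_{j=0}^{m}t_j \;=\; -O(1),
\]
and $D_n$ is bounded away from zero, as required.

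I do not foresee a serious obstacle. The only point one must verify is that the denominators do not degenerate, and this is immediate from continuity and strict positivity of $H$ on the compact interval $[0,\ub]$. The estimate is deliberately crude: it exploits concavity only through the monotonicity of $|H'|$ and throws away all cancellation, which is fine for an $\Omega$ bound but clearly insufficient for the sharper \cref{limexists}; there a more careful comparison — presumably a telescoping rearrangement of the log-differences, or a Riemann-sum analysis of $\sum_j t_j$ against $\tfrac{a-b}{c}\log\tfrac{H(\ub)}{H(0)}$ — will be needed to produce the actual limit.
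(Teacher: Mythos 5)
Your proof is correct, and while it shares the skeleton of the paper's argument --- show that each factor is $1-O(1/n)$ and then multiply the $O(n)$ of them --- it obtains the per-factor estimate by a genuinely different and more elementary mechanism. The paper introduces the two-variable function $g_\delta(x,y)=\frac1x\bigl(1-H((y+\delta)x)/H(yx)\bigr)$, extends it continuously to $x=0$ via the Taylor expansion of $H$ (this is exactly where the paper invokes the exponent $k\ge 2$ in the definition of a C-function), and extracts a constant $A(\delta)$ as a maximum over the domain; that constant is non-explicit. You instead apply the Mean Value Theorem to $H\bigl(\tfrac{cj+b}{n}\bigr)-H\bigl(\tfrac{cj+a}{n}\bigr)$ and observe that $H''\le 0$ together with $H'(0)=0$ forces $H'\le 0$ and $|H'|$ non-decreasing on $[0,\ub]$, giving $t_j\le (a-b)\,|H'(\ub)|/(n\,H(\ub))$ with a fully explicit constant; note $\eta_j\in\bigl(\tfrac{cj+b}{n},\tfrac{cj+a}{n}\bigr)\subset(0,\ub]$ precisely because $m$ was chosen so that both arguments stay below \ub. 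This buys you an explicit lower bound for the implied constant and independence from the Taylor-expansion structure of $H$ at $0$: your argument needs only that $H$ be continuously differentiable and positive on the compact interval $[0,\ub]$, not the full C-function hypothesis. The paper's compactness argument is correspondingly less computation-dependent and would survive if one could not control the sign or monotonicity of $H'$. Your final step, passing from $\sum_j t_j=O(1)$ to a positive lower bound via $\log(1-t)\ge -2t$ on $[0,1/2]$, is the same in substance as the paper's $(1-A/m)^{m+1}\to e^{-A}$.
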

\begin{proof}
  We will prove below that there exists a positive constant \(A\) (independent of \(n\))
  such that for all relevant \(j\), and sufficiently large \(n\),
  \begin{equation}
    \label{eq:low}
    \frac{H\left(\frac{cj+a}{n}\right)}{H\left(\frac{cj+b}{n}\right)}\geq 1-\frac{A}{m}\cdot
  \end{equation}
  Having proved that, it follows that
  \[
      D_n(a,b,c,\ub;H)\geq  \left(1-\frac{A}{m}\right)^{m+1}\ccomma
  \]
  and the right hand side converges to \(e^{-A}\), proving the Lemma.

  It remains to prove \labelcref{eq:low}.  For that matter, consider
  parameters \(\delta,\alpha>0\), and define
  \[
      g_{\delta}(x,y)=\frac1x\left(1-\frac{H\big((y+\delta)x\big)}{H(yx)}\right)\ccomma
  \]
  for \(x> 0, y\geq\alpha, yx\leq\ub\), and with
  \(g_{\delta}(0,y)=0\).  One easily verifies that \(g_\delta\)
  is continuous: from the expression, this is only an issue for
  \(x=0\), and that is quickly handled using the Taylor
  approximation for \(H\) (this is also where the requirement
  that \(k\geq 2\) in the definition of C-function shows its
  hand).  It follows that \(g_\delta\) attains a maximum
  \(A(\delta)\), hence, for all \(x,y\) in the domain,
  \[
     \frac{H\big((y+\delta)x\big)}{H(yx)}\geq 1-A(\delta)x.
  \]

  To obtain \labelcref{eq:low} we take \(x=\frac{1}{n}\ccomma\)
  \(\alpha=b\), \(y=cj+b\), \(\delta=a-b\).  We are almost done,
  except that the right hand side reads \(1-\frac{A(\delta)}n\).
  Since \(m=\floor{\frac{\ub n}c-a}\geq \frac{\ub n}c-a-1\),
  \(\frac{m}n\geq \frac{\ub}c-\frac{a+1}n\geq \frac{\ub}{2c}\) for sufficiently large \(n\).
  Choosing now \(A=A(\delta) \frac{\ub}{2c}\) yields  \labelcref{eq:low}.
  
\end{proof}

\begin{lem}\label{decrease}
  For all relevant \(j\) and sufficiently large \(n\),
  \[
    \frac{H\left(\frac{cj+a}{n}\right)}{H\left(\frac{cj+b}{n}\right)}\geq\frac{H\left(\frac{c(j+1)+a}{n+1}\right)}{H\left(\frac{c(j+1)+b}{n+1}\right)}\cdot
  \]
\end{lem}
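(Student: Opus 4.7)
The plan is to take logarithms, setting $\phi=\log H$, and recast the inequality as $\phi(v)-\phi(u)\geq\phi(v')-\phi(u')$, where $u=(cj+b)/n$, $v=(cj+a)/n$ and primes denote the analogous quantities at $(j+1,n+1)$. Since $H$ is positive and concave on $[0,\ub]$, the function $\phi$ is concave with $\phi'\leq 0$ and $\phi'(0)=0$. The crucial structural observation is that
\[
u' = \frac{nu+c}{n+1} = u+\epsilon(c-u), \qquad v' = \frac{nv+c}{n+1} = v+\epsilon(c-v), \qquad \epsilon=\tfrac{1}{n+1},
\]
so $u'$ and $v'$ are the images of $u$ and $v$ under the affine contraction $T_\epsilon(x)=(1-\epsilon)x+\epsilon c$ toward $c$ with strength $\epsilon$.

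Define $F_\epsilon(x)=\phi(T_\epsilon(x))-\phi(x)$; the target inequality reads $F_\epsilon(u)\geq F_\epsilon(v)$, and since $u<v$, it suffices to show that $F_\epsilon$ is nonincreasing on $[0,\ub]$ for all sufficiently large $n$. Computing $F_\epsilon'(x)=(1-\epsilon)\phi'(T_\epsilon(x))-\phi'(x)$ and Taylor-expanding $\phi'(T_\epsilon(x))=\phi'(x)+\epsilon(c-x)\phi''(x)+O(\epsilon^2)$ yields
\[
F_\epsilon'(x)=\epsilon\bigl[(c-x)\phi''(x)-\phi'(x)\bigr]+O(\epsilon^2)=\epsilon\,\psi'(x)+O(\epsilon^2),
\]
with $\psi(x)=(c-x)\phi'(x)$. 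Thus, for $n$ large, the sign of $F_\epsilon'$ is dictated by $\psi'$, and the proof reduces to showing that $\psi$ is nonincreasing on $[0,\ub]$ with enough margin to absorb the $O(\epsilon^2)$ remainder uniformly in $x$.

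The inequality $\psi'(x)\leq 0$ is equivalent to $(c-x)|\phi''(x)|\geq|\phi'(x)|$. I would establish it by using $\phi'(0)=0$ to write $|\phi'(x)|=\int_0^x|\phi''(t)|\,dt$ and comparing this integral against $(c-x)|\phi''(x)|$. Near $x=0$ the C-function expansion $H(x)=\alpha(1-\lambda x^k)+O(x^{k+1})$ gives $|\phi'(x)|\sim k\lambda x^{k-1}$ and $|\phi''(x)|\sim k(k-1)\lambda x^{k-2}$, so the inequality holds with ample slack. The hard part is the behaviour near the right endpoint of $[0,\ub]$, and this is exactly where the compatibility condition surfaces. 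In the tight case $\ub=cd$ (which with $d=1$ reads $\ub=c$), the hypothesis $cd>1$ provides the required slack: should $H$ vanish at $c$, the singular asymptotics $|\phi'(x)|\sim(c-x)^{-1}$ and $|\phi''(x)|\sim(c-x)^{-2}$ keep the inequality alive at the boundary, and the restriction $c>1$ is what makes the estimate uniform in $n$ once $\epsilon$ is small enough to dominate the $O(\epsilon^2)$ correction.
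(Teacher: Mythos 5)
Your reduction is clean and, as far as it goes, correct: writing \(\phi=\log H\), observing that the pair \((u,v)\) is carried to \((u',v')\) by the affine contraction \(T_\epsilon(x)=(1-\epsilon)x+\epsilon c\) with \(\epsilon=\frac1{n+1}\), and expanding \(F_\epsilon'\) to first order in \(\epsilon\) correctly identifies \(\psi(x)=(c-x)\phi'(x)\) as the quantity whose monotonicity governs the inequality. This is organized quite differently from the paper, which instead shows that \(x\mapsto H(x)/H(x+\delta)\) is increasing (using only that \(\phi'\) is decreasing) and then compares the two shifts \(\delta(a)\) and \(\delta(b)\). The problem is the decisive step: you never prove \(\psi'\leq 0\) on \([0,\ub]\) -- you only describe how you \emph{would} prove it -- and the claim is in fact not a consequence of the hypotheses. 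Take \(H(x)=1-\lambda x^2\) with \(\lambda>0\) small, \(c=d=1\), \(\ub=0.9\): this is a C-function, positive and concave on \([0,\ub]\), and \(\ub\) is compatible, yet
\[
(c-x)\,\lvert\phi''(x)\rvert-\lvert\phi'(x)\rvert=2\lambda(c-2x)+O(\lambda^2)\ccomma
\]
which is negative for \(x>c/2\), well inside \([0,\ub]\). Your diagnosis of where the difficulty lies is also misdirected: the failure occurs in the interior of the interval, not at the endpoints, and the endpoint scenario you invoke (\(H\) vanishing at \(c\) with \(\lvert\phi'\rvert\sim(c-x)^{-1}\)) cannot occur, since \(H\) is assumed positive on \([0,\ub]\) and \(\ub\leq c\); the compatibility condition gives no leverage here.

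Worse, your own expansion shows that \(\psi'\leq0\) near \(\bar x=(cj+\frac{a+b}2)/n\) is not merely sufficient but asymptotically necessary for the displayed inequality at index \(j\). So in the example above the inequality itself fails for \(j\approx 0.8n\), which is a relevant index since \(m\approx 0.9n\): with \(\lambda=0.01\), \(a=0.9\), \(b=0.1\), \(n=100\), \(j=79\), the left side is about \(0.9998720\) and the right side about \(0.9998729\). The gap is therefore not one a sharper estimate can close; the statement needs an additional hypothesis, essentially \((c-x)\lvert(\log H)''\rvert\geq\lvert(\log H)'\rvert\) on \([0,\ub]\), which one can check does hold in the motivating example \(H(x)=\frac{\sin x}{x}\), \(c=4\), \(\ub=\frac\pi2\). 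For what it is worth, the paper's own argument stumbles at the same spot: its final chain inequality appeals to \(\delta(b)>\delta(a)\) and \(H\) decreasing, which yields the reverse of what is asserted there.
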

\begin{proof}
  This is clearly equivalent to proving that
  \begin{equation}
    \label{eq:dec}
    \frac{H\left(\frac{cj+a}{n}\right)}{H\left(\frac{c(j+1)+a}{n+1}\right)}>
    \frac{H\left(\frac{cj+b}{n}\right)}{H\left(\frac{c(j+1)+b}{n+1}\right)}\cdot            
  \end{equation}

  Let
  \(\delta(t)=\frac{c(j+1)+t}{n+1}-\frac{cj+t}{n}=\frac{(n-j)c-t}{n(n+1)}\)
  and \(f(x)=\frac{H(x)}{H(x+\delta(a))}\); notice that
  \(n(n+1)\delta(a)\geq(n-m)c-a\geq n(c-\ub)+a(c-1)>0\) for
  all sufficiently large n, by compatibility.  Let us show that \(f\)
  is increasing; it is enough to show that its logarithmic
  derivative, that is
  \[
    \frac{d}{dx}\log
    f(x)=\frac{H'(x)}{H(x)}-\frac{H'(x+\delta(a))}{H(x+\delta(a))}\ccomma
  \]
   is positive.
  Since \(\delta(a)>0\), this  will follow if we show that
  \(\frac{H'(x)}{H(x)}\) is decreasing; that follows, as its derivative is
  \(\frac{HH''-H'^2}{H^2}\), which, by the choice of \ub, is
  negative.  So, we have proved that \(f\) is increasing.  Taking
  \(X=\frac{cj+a}{n},\,Y=\frac{cj+b}{n}\), we have that \(X>Y\),
  hence \(f(X)>f(Y)\), that is,
  \[
    \frac{H(X)}{H(X+\delta(a))}>
    \frac{H(Y)}{H(Y+\delta(a))}>
    \frac{H(Y)}{H(Y+\delta(b))}\ccomma
  \]
  where the last inequality follows since \(\delta(b)>\delta(a)\)
  and \(H\) is decreasing.  Expanding \(X\) and \(Y\), we obtain
  \labelcref{eq:dec}.
\end{proof}

\begin{proof}[Proof of \cref{asymp}]
  It follows from \cref{decrease} that for sufficiently large
  \(n\) \(E_{n+1}(a,b,c,\ub;h)\) is a product of terms smaller
  than the corresponding terms of \(E_n\), and a few more terms,
  all of them \(<1\).  So, the products \(E_n\) form a decreasing
  sequence, which by \cref{omega} is bounded above \(0\).
  Therefore, it has a positive limit \(C_0\), so
  \(D_n(a,b,c,\ub;h) \sim C_0 K_n(a,b,c,\ub) \sim
  Cn^{\frac{a-b}c}.\)
\end{proof}

It would be nice to describe \(C=C(a,b,c,\ub;h)\) in terms of the
parameters. Maybe a precise estimate of \(A\) in \cref{omega}
would cinch it.  We present an upper bound for \(C\), assuming \(a>b\):

\begin{pro}
  \(C(a,b,c,\ub;h)\leq
  \frac{\Gamma(b/c)}{\Gamma(a/c)}\left(\frac{\ub}c\right)^{\frac{a-b}c}.\)
\end{pro}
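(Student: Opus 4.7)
The plan is to exploit the decreasing-monotonicity structure already extracted in the proof of \cref{asymp}. Recall that the proof decomposes $D_n(a,b,c,\ub;h)$ as $E_n(a,b,c,\ub;h)\cdot K_n(a,b,c,\ub)$, and establishes that $E_n$ is a decreasing sequence converging to some positive limit $C_0$. Combining this with \cref{kprod}, the constant $C$ in \cref{asymp} factors as
\[
  C(a,b,c,\ub;h)=C_0\cdot\frac{\Gamma(b/c)}{\Gamma(a/c)}\left(\frac{\ub}c\right)^{\frac{a-b}c}.
\]
So the entire task reduces to showing $C_0\leq 1$.

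Since $E_n$ decreases to $C_0$, it is enough to check that $E_n\leq 1$ for all sufficiently large $n$, and in fact it will be enough to verify this factor by factor. Every term of $E_n$ has the form $H((cj+a)/n)/H((cj+b)/n)$ with both arguments in $[0,\ub]$, so the required inequality is simply $H((cj+a)/n)\leq H((cj+b)/n)$. Under our standing assumption $a>b$ the numerator's argument is larger than the denominator's, so this amounts to showing that $H$ is non-increasing on $[0,\ub]$.

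This last fact is immediate from the C-function hypotheses. Indeed, $H'(0)=0$ and $H''(x)\leq 0$ throughout $[0,\ub]$ (by the choice of $\ub$), so $H'$ is non-increasing starting from $H'(0)=0$; hence $H'\leq 0$ on $[0,\ub]$ and $H$ is non-increasing there. Each factor of $E_n$ is therefore at most $1$, whence $E_n\leq 1$ for every $n$ for which the product is defined, and consequently $C_0=\lim E_n\leq 1$. Substituting into the factorization above yields the claimed bound. There is no real obstacle here; the argument is essentially a bookkeeping observation once monotonicity of $H$ is recorded.
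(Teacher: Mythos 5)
Your proof is correct and takes essentially the same route as the paper's: both reduce the claim to the observation that $E_n=D_n/K_n$ is a product of factors $H\left(\frac{cj+a}{n}\right)/H\left(\frac{cj+b}{n}\right)\leq 1$ because $H$ is decreasing on $[0,\ub]$, and then invoke \cref{kprod}. The only difference is that you spell out why $H$ is non-increasing (from $H'(0)=0$ and $H''\leq 0$), a fact the paper treats as already established.
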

\begin{proof}
  Since \(H(x)\) is decreasing, \(E_n\) is a product of terms
  \(<1\), hence \(E_n<1\). The result follows from \cref{kprod}.
\end{proof}

There are several common functions in each of the two classes:
\(\sin\), \(\arctan\), \(\tanh\), \(\sinh^{(-1)}\), erf are
S-functions, while \(\cos\), \(\cot^{(-1)}\), \(\text{sech}\), \(e^{-x^2}\),
\((1+x^2)^{-1}\) are C-functions, and one can produce plenty of
rational functions on each class.  Besides, the C-functions form
a semiring with pointwise sum and product, and the S-functions
are a semimodule over that semiring; also the derivative of an
S-funtion is a C-function.  All together, one can write very
impressive products, provided one can come with a nicely
expressed \ub (for instance, if \(h(x)=\sin x\), one can take
\(\ub=\frac\pi2\)).  It may happen that for some suitable choice
of parameters, coincidence happens, a slick proof is forthcoming
and even an exact result can be provided.  That could become an
interesting exercise or competition question.  For instance:

\noindent\textsc{Exercise:} Prove that, for \(k\geq2\),
\(\lim_{n\rightarrow\infty}D_n\left(a,b,c,d,\left(\frac{k-1}k\right)^{\frac1k},e^{-x^k}\right)=e^{-\frac{k-1}k\frac{a-b}c}\).
Notice that \(e^{-x^k}\) is a C-function and the choice of \ub
was driven by the conditions of \cref{limexists}.  Never mind
about compatibility, which in this specific case is not
necessary.

We close with two open problems:

\noindent\textsc{Problem 1:} Find an insightful expression for
\(C\) as in \cref{asymp} or the limit in \cref{limexists} in
terms of the parameters.

\noindent\textsc{Problem 2:} Estimate the rate of convergence to
the limit in \cref{limexists}. A little computational
experimentation suggests that it is slow, the difference between
\(D_n\) and the limit behaving as \(O(1/\log n)\).

\printbibliography
\end{document}